\def\dom{D}
\def\NN{{\mathbb{N}}}
\def\RR{\mathbb{R}}
\def\CC{\mathbb{C}}
\def\dd{\:\mathrm{d}}
\def\ee{\mathrm{e}}
\newtheorem{theorem}{Theorem}[section]
\newtheorem{lemma}[theorem]{Lemma}
\newtheorem{proposition}[theorem]{Proposition}
\theoremstyle{definition}
\newtheorem{assumptions}[theorem]{Assumptions}
\begin{document}

\title[PDE approximation of ODE systems]{PDE approximation of large systems of differential equations}

\author[B\'atkai]{Andr\'as B\'atkai, \'Agnes Havasi, R\'obert Horv\'ath, D\'avid Kunszenti-Kov\'acs and P\'eter L. Simon}

\address{A.B\'atkai, E\"otv\"os Lor\'and University, Institute of Mathematics and
Numerical Analysis and Large Networks Research Group\\ Hungarian Academy of Sciences\\
P\'azm\'any P. s\'et\'any 1/C, 1117 Budapest, Hungary.}
\email{batka@cs.elte.hu}

\address{\'A.Havasi, E\"otv\"os Lor\'and University, Institute of Mathematics and
Numerical Analysis and Large Networks Research Group\\ Hungarian Academy of Sciences\\
P\'azm\'any P. s\'et\'any 1/C, 1117 Budapest, Hungary.}
\email{hagi@nimbus.elte.hu}

\address{R.Horv\'ath, Budapest University of Technology, Institute of Mathematics and
Numerical Analysis and Large Networks Research Group\\ Hungarian Academy of Sciences\\
P\'azm\'any P. s\'et\'any 1/C, 1117 Budapest, Hungary.}
\email{rhorvath@math.bme.hu}

\address{D.Kunszenti-Kov\'acs, E\"otv\"os Lor\'and University, Institute of Mathematics and
Numerical Analysis and Large Networks Research Group\\ Hungarian Academy of Sciences\\
P\'azm\'any P. s\'et\'any 1/C, 1117 Budapest, Hungary.}
\email{daku@fa.uni-tuebingen.de}

\address{P.L.Simon, E\"otv\"os Lor\'and University, Institute of Mathematics and
Numerical Analysis and Large Networks Research Group\\ Hungarian Academy of Sciences\\
P\'azm\'any P. s\'et\'any 1/C, 1117 Budapest, Hungary.}
\email{simonp@cs.elte.hu}

\thanks{Supported by the OTKA grant Nr. K81403 and by the European Research Council Advanced Researcher Grant No. 227701.}

\subjclass{47D06, 47N40, 65J10}
\keywords{dynamics on networks; $C_0$-semigroups; approximation theorems; finite differences}

\date{24.03.2014}

\begin{abstract}
A large system of ordinary differential equations is approximated by a parabolic partial differential equation with dynamic boundary condition and a different one with Robin boundary condition. Using the theory of differential operators with Wentzell boundary conditions and similar theories, we give estimates on the order of approximation. The theory is demonstrated on a voter model where the Fourier method applied to the PDE is of great advantage.
\end{abstract}
\allowdisplaybreaks
\maketitle


\section{Introduction}

It has been known for a long time that there is a wonderful interplay between discrete time stochastic
processes and partial differential equations, see the seminal paper by Courant, Friedrichs and
Lewy \cite{CFL}. Since then, the pioneering work of Feller revealed deep connections to second
order differential equations with ``complicated'' boundary conditions, see the monograph by Mandl
\cite{mandl} for further details.

Our intention with this work is to go back to the roots and explore the connections of large
systems of ordinary differential equations to parabolic partial differential equations with
various (Wentzell, Robin) boundary conditions from a rather particular point of view: given a large
system of ordinary differential equations, we construct an ``approximating'' partial
differential equation, give estimates on the accuracy of this approximation, and show that in some
cases it is much easier to handle the parabolic equation than the large ODE system.

The main motivation of this theoretical investigation is to approximate a dynamic process on a
network by a partial differential equation. The network is given by an undirected graph and the
process is specified by the possible states of the nodes and the transition probabilities. Typical
examples are epidemic processes and opinion propagation on networks.  Analysing the mean field approximation
for the expected number of infected nodes in an epidemic process on a large network we were led to
a first order PDE approximation in our previous work B\'atkai et al.
\cite{Batkai-Kiss-Sikolya-Simon}. In a recent paper, in which regular random, Erd\H
os-R\'enyi, bimodal random, and Barab\'asi-Albert graphs are studied, we have shown that a suitable
choice of the coefficients in the master equation leads to an ODE approximation with tridiagonal
transition rate matrices (see Nagy, Kiss and Simon \cite{NagySimon}). Our study in this paper aims at
approximating dynamic processes on networks, for which the transition rate matrix of the underlying
Markov chain has a tridiagonal structure (with a possible extension to similar matrices).

The paper is organized as follows. First, we introduce our general notation and setup along with a
standard heuristic derivation of an approximating PDE with dynamic boundary conditions via finite differences. It is followed by a different finite difference approximation to yield an approximating PDE with Robin boundary conditions.
Then in Section 4 we present the operator semigroup theoretic setup with the general approximation theorems needed, and we show how to use the well-developed operator matrix approach to differential operators
with Wentzell boundary conditions due to Engel and coauthors \cite{Engel,Engel1, Batkai-Engel} to
 prove error estimates. Finally, in the last section we illustrate our results
with two examples:  The first one is the propagation of two opinions along a cycle graph, called a
voter-like model, the second is an $SIS$ type epidemic propagation on a complete graph.

\section{Dynamic boundary conditions}\label{sec:dyn}
\noindent
In this section we fix our notation, collect the main definitions, derive the first approximating partial differential equation, and give the main heuristics which lie behind our approximation.

Let $N\in\NN$ be a large, fixed integer, and $a, b$ and $c$ real-valued functions on $[-\frac{1}{N},1+\frac{1}{N}]$.
For $0\leq k\leq N$, let $a_k:=a(\frac{k}{N})$, $b_k:=b(\frac{k}{N})$, and $c_k:=c(\frac{k}{N})$.
Consider the following tridiagonal matrix
\[
A_N:=\left(\begin{array}{ccccccc}
b_0 & c_1& 0&\cdots&0&0&0\\
a_0 & b_1&c_2&\cdots&0&0&0\\
0&a_1&b_2&&0 &0&0\\
\vdots& & &\ddots& & & \vdots\\
0& 0&0& &b_{N-2}&c_{N-1}&0\\
0& 0&0& &a_{N-2}&b_{N-1}&c_N\\
0& 0&0 &\cdots&0 &a_{N-1}&b_N\\
\end{array}\right)
\]
and the corresponding (ODE) system
\begin{equation}\label{eq:ode}
\left\{
\begin{aligned}
\dot{x}(t)&=A_Nx(t)\\
x(0)&=v \in\CC^{N+1}
\end{aligned}
\right.
\end{equation}
 on $\CC^{N+1}$.

We wish to approximate the solution $x(t)$ to this (ODE) by considering it as a discretisation of a continuous function $u(t,z)$ on the interval $[0,1]$, i.e.,
\[
u\left(t,\frac{k}{N}\right)=x_k(t)
\]
for $0\leq k\leq N$. Now we derive an approximate (PDE) for the function $u(\cdot,\cdot)$ using the (ODE) given above.
For any $1\leq k\leq N-1$ we have:
\begin{eqnarray*}
\partial_t u\left(t,\frac{k}{N}\right)&=&\dot x_k(t)= a_{k-1} x_{k-1}(t) + b_k x_k(t) + c_{k+1} x_{k+1}(t)\\
&=& \frac{1}{2}a_{k-1}\left(x_{k-1}(t) -2x_k(t)+x_{k+1}(t)\right)\\
&&- a_{k-1}\left(\frac{x_{k+1}(t)-x_{k-1}(t)}{2}\right)\\
&&+(a_{k-1}+b_k+c_{k+1})x_k(t)\\
&&+ \frac{1}{2}c_{k+1}\left(x_{k-1}(t) -2x_k(t)+x_{k+1}(t)\right) \\
&&+ c_{k+1}\left(\frac{x_{k+1}(t)-x_{k-1}(t)}{2}\right)\\
&=& \frac{1}{2}a_{k-1}\left(u\left(t,\frac{k-1}{N}\right) -2u\left(t,\frac{k}{N}\right)+u\left(t,\frac{k+1}{N}\right)\right)\\
&&- a_{k-1}\left(\frac{u\left(t,\frac{k+1}{N}\right)-u\left(t,\frac{k-1}{N}\right)}{2}\right)\\
&&+(a_{k-1}+b_k+c_{k+1})u\left(t,\frac{k}{N}\right)\\
&&+ \frac{1}{2}c_{k+1}\left(u\left(t,\frac{k-1}{N}\right) -2u\left(t,\frac{k}{N}\right)+u\left(t,\frac{k+1}{N}\right)\right)\\
&&+ c_{k+1}\left(\frac{u\left(t,\frac{k+1}{N}\right)-u\left(t,\frac{k-1}{N}\right)}{2}\right).
\end{eqnarray*}
By considering the approximations
\[
u\left(t,\frac{k-1}{N}\right) -2u\left(t,\frac{k}{N}\right)+u\left(t,\frac{k+1}{N}\right)= \frac{1}{N^2}\left(\partial_{zz}u\left(t,\frac{k}{N}\right)+O\left(\frac{1}{N^2}\right)\right)
\]
and
\[
\frac{u\left(t,\frac{k+1}{N}\right)-u\left(t,\frac{k-1}{N}\right)}{2}=\frac{1}{N}\left(\partial_z u\left(t,\frac{k}{N}\right)+O\left(\frac{1}{N^2}\right)\right),
\]
using the functions $a, b$ and $c$, and writing $h:=\frac{1}{N}$, we obtain the approximate (PDE)
\begin{equation}\label{eq:pde_main}
\left\{
\begin{aligned}
\partial_t u(t,z)&\simeq \frac{h^2}{2}\left(a\left(z-h\right)+c\left(z+h\right)\right) \partial_{zz} u(t,z)\\
&+h(c(z+h)-a(z-h))\partial_z u(t,z)\\
& +(a(z-h)+b(z)+c(z+h)) u(t,z),
\end{aligned}
\right.
\end{equation}
valid for $z\in(0,1)$. Note that the approximation is of order $h^3$.

On the boundary, similar transformations yield the first order boundary equations
\begin{equation}\label{eq:pde_boundary_left}
\partial_{t} u(t,0)\simeq hc(h)\partial_z u(t,0) + (c(h)+b(0))u(t,0)
\end{equation}
and
\begin{equation}\label{eq:pde_boundary_right}
\partial_t u(t,1)\simeq -ha(1-h)\partial_z u(t,1) + (a(1-h)+b(1))u(t,1).
\end{equation}
Note that here the approximations are only of order $h^2$.

The initial condition $u(0,z)$ is to be chosen as a suitable interpolation of the values $x_k(0)=v_k$ at $z=\frac{k}{N}$ ($0\leq k\leq N$).

\section{Robin boundary condition}

Motivated by stochastic processes, we restrict ourselves here to the important special case where the column sums of the matrix $A_N$ are zero, i.e., $a_0=-b_0,\ b_k=-(a_k+c_k), k=1,2\ldots,N-1$, and $c_N=-b_N$.
Our aim is to find a PDE with suitable boundary
condition the appropriate discretisation of which results in
(\ref{eq:ode}), and which preserves the integral of the initial function.

Let us seek the PDE in the form
\begin{equation}\label{eq:pde}
\partial_tu(t,z)=\partial_{zz}(\alpha(z)u(t,z))+\partial_z(\beta(z)u(t,z)),
\end{equation}
where $z\in (-\frac{1}{2N},1+\frac{1}{2N})$ and
$t\in (0,T]$, and the functions $\alpha$ and $\beta$ are to be
defined. For the derivation of the boundary conditions we take
into account the requirement that
\[\int_{-\frac{1}{2N}}^{1+\frac{1}{2N}}u(t,z)\mathrm{d}z=const.\ \ \forall t\in [0,T].\]
Integrating \eqref{eq:pde} on $[-\frac{1}{2N},1+\frac{1}{2N}]$ we
obtain the equality
\begin{align*}
0=\partial_t\left(\int_{-\frac{1}{2N}}^{1+\frac{1}{2N}}u(t,z)\mathrm{d}z\right)&=
\partial_z(\alpha u)\left(1+\frac{1}{2N},t\right)-\partial_z(\alpha u)\left(-\frac{1}{2N},t\right)\\
&\quad+(\beta u)\left(1+\frac{1}{2N},t\right)-(\beta u)\left(-\frac{1}{2N},t\right),
\end{align*}
which obviously holds if
\begin{equation}\label{eq:bc1}
\partial_z(\alpha u)\left(-\frac{1}{2N},t\right)+(\beta u)\left(-\frac{1}{2N},t\right)=0, \text{ and }
\end{equation}
\begin{equation}\label{eq:bc2}
\partial_z(\alpha u)\left(1+\frac{1}{2N},t\right)+(\beta u)\left(1+\frac{1}{2N},t\right)=0
\end{equation}
hold. Consider now the continuous problem (\ref{eq:pde}) with  boundary
conditions (\ref{eq:bc1})-(\ref{eq:bc2}) and an initial condition
$u(0,z)$ obtained from a suitable interpolation of $v$ in
(\ref{eq:ode}).

Denote the approximation of the solution at the point $z=kh$ by $x_k(t),
k=0,1\ldots, N$. We seek the functions $\alpha$ and $\beta$ such
that by approximating appropriately the derivatives w.r.t. the variable $z$ in
(\ref{eq:pde}), for the functions $x_0(t),x_1(t),\ldots,$ $x_N(t)$
we obtain a system of ODE's of the form (\ref{eq:ode}).

Let us approximate the partial derivatives w.r.t. $z$ for the mesh
points of the indices $k=0,1,2,\ldots, N$ by central
differences. To this aim we define two virtual mesh points:
$-\frac{1}{N}$ and $1+\frac{1}{N}$, where the corresponding
solutions will be denoted by $x_{-1}(t)$ and $x_{N+1}(t)$,
respectively. Then
\begin{equation} \label{eq:u_k}
x_k'(t)=\frac{\alpha_{k-1}x_{k-1}-2\alpha_k x_k+\alpha_{k+1}x_{k+1}}{h^2} + \frac{\beta_{k+1}x_{k+1}-\beta_{k-1}x_{k-1}}{2h}
\end{equation}
for $k=0,1,2,\ldots, N$. Eliminate $x_{-1}$ in the equation for
$k=0$ by considering the left-hand side boundary condition
(\ref{eq:bc1}). To do so, we approximate the derivative w.r.t. $z$
by central difference, while the function value by the arithmetic
mean of the two neighboring values, $x_{-1}$ and $x_1$, to obtain
\[
\frac{\alpha_0x_0-\alpha_{-1}x_{-1}}{h}+\frac{\beta_0x_0+\beta_{-1}x_{-1}}{2}=0.
\]
From this we have
\[
\frac{\alpha_0x_0}{h^2}+\frac{\beta_0x_0}{2h}=\frac{\alpha_{-1}x_{-1}}{h^2}-\frac{\beta_{-1}x_{-1}}{2h},
\]
which yields
\begin{equation} \label{eq:u0}
x_0'(t)=\left(-\frac{\alpha_0}{h^2}+\frac{\beta_0}{2h}\right)x_0+ \left(\frac{\alpha_1}{h^2}+\frac{\beta_1}{2h}\right)x_1.
\end{equation}
Comparing \eqref{eq:u0} to the first equation of \eqref{eq:ode}, we have
\begin{equation*}
b_0=-\frac{\alpha_0}{h^2}+\frac{\beta_0}{2h} \text{ and } c_1=\frac{\alpha_1}{h^2}+\frac{\beta_1}{2h}.
\end{equation*}
Comparing the further  equations of (\ref{eq:u_k}) with system (\ref{eq:ode}),
we obtain the relations
\begin{equation} \label{ak_ck}
a_k=\frac{\alpha_k}{h^2}-\frac{\beta_k}{2h},\quad c_k=\frac{\alpha_k}{h^2}+\frac{\beta_k}{2h}.
\end{equation}
It is easy to see that $a_0=-b_0,\ b_k=-(a_k+c_k), k=1,2\ldots,N-1$, and
similar considerations on the right boundary show that $c_N=-b_N.$
The functions $\alpha$ and $\beta$ can be determined from the
equations \eqref{ak_ck} to obtain
\[
\alpha_k=\frac{(a_k+c_k)h^2}{2},\quad \beta_k={(c_k-a_k)h},
\]
from which
\begin{equation*}
\alpha(z)=\frac{(a(z)+c(z))h^2}{2} \text{ and } \beta(z)=(c(z)-a(z))h
\end{equation*}
follows.

The order of approximation of this scheme is yet to be calculated.
For $k=1,2,\ldots N$ the approximation is obviously of order $h^3$ as in the previous example.
However, in the points $z=0$ and $z=1$ (corresponding to $k=0$ and
$k=N+1$) it has only order of $h^2$, since for $z=0$ we have
\begin{align*}
&\left(-{\alpha_0}+\frac{\beta_0}{2} \right)x_0(t)+ \left({\alpha_1}+\frac{\beta_1}{2} \right)u_1(t)\\
&=\frac{\alpha_{-1}u_{-1}(t)-2\alpha_0u_0(t)+\alpha_1u_1(t)}{h^2} h^2 +\frac{\beta_1u_1(t)-\beta_{-1}u_{-1}(t)}{2h} h\\
&\qquad +\frac{1}{h}\left ( \frac{\alpha_0u_0(t)-\alpha_{-1}u_{-1}(t)}{h} h^2 + \frac{\beta_0u_0(t)+\beta_{-1}u_{-1}(t)}{2} h\right )\\
&=\partial_{zz}(\alpha u)(0,t)+O(h^4)+\partial_z(\beta u)(0,t)+O(h^3)+\frac{1}{h}(\partial_z(\alpha u)(-h/2,t)+O(h^4)\\
&\qquad+(\beta u)(-h/2,t)+O(h^3))\\
&=\partial_{zz}(\alpha u)(0,t)+\partial_z(\beta u)(0,t)+O(h^3)+\frac{1}{h}(0+O(h^3))\\
&=\partial_{zz}(\alpha u)(0,t)+\partial_z(\beta u)(0,t)+O(h^2).
\end{align*}
For $z=1$ similar relations hold.

In the following we consider the exact PDE and its solution, the latter being the approximation of the exact solution to the ODE at the points $\tfrac{k}{N}$, and show estimates on how good this approximation is.

\section{Theorems}
\noindent
Now we give a rather general setup to prove the desired estimates on the approximation. We use the theory of operator semigroups and our general reference is Engel and Nagel \cite{EN:00}  or B\'atkai et al. \cite{Batkai-Csomos-Farkas-Ostermann}.

\begin{assumptions}\label{c:apro1.ass:approx_space}
Let $X_n$, $X$ be Banach spaces and assume that there are bounded linear operators $P_n:X\to X_n$, $J_n:X_n\to X$ with the following properties:
\begin{itemize}
\item There is a constant $K>0$ with $\|P_n\|,\, \|J_n\|\leq K$ for all $n\in\NN$,
\item $ P_n J_n = I_n$, the identity operator on  $X_n$, and
\item $J_nP_n f\to f$  as  $n\to\infty$  for all  $f\in X$.
\end{itemize}
\end{assumptions}

\begin{assumptions}\label{c:apro1.ass:approx_gener}
Suppose that the operators $A_n$, $A$ generate strongly continuous semigroups on $X_n$ and $X$, respectively, and that there are constants $M\geq 0$, $\omega\in\RR$ such that the stability condition
\begin{equation}\label{c:apro1.eq:stability}
\|T_n(t)\|\leq M\ee^{\omega t} \qquad \text{ holds for all } n\in\NN,\, t\geq 0.
\end{equation}
\end{assumptions}

We will make use of a special variant of the Trotter-Kato theorem, which we cite here for convenience, see the lectures by B\'atkai, Csom{\'o}s, Farkas and Ostermann \cite[Proposition 3.8]{Batkai-Csomos-Farkas-Ostermann}.

\begin{proposition}\label{prop:appr_first_gen}
Suppose that Assumptions \ref{c:apro1.ass:approx_space} hold, that there is a dense subset $Y\subset D(A)$  invariant under the semigroup $T$ such that $P_nY\subset \dom(A_n)$, and that $Y$ is a Banach space with some norm $\|\cdot\|_Y$ satisfying
\begin{equation*}
\|T(t)\|_Y \leq M \ee^{\omega t}.
\end{equation*}
If there are constants $C>0$ and $p\in \NN$ with the property that for all $f\in Y$
\begin{equation*}
\|A_nP_n f - P_nAf\|_{X_n}\leq C\frac{\|f\|_Y}{n^p},
\end{equation*}
then for each $t>0$ there is $C'>0$ such that
\begin{equation*}
\|T_n(t)P_n f - P_nT(t)f\|_{X_n}\leq C'\frac{\|f\|_Y}{n^p}.
\end{equation*}
Moreover,  this convergence is uniform in $t$ on compact intervals.
\end{proposition}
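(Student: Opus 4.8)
The plan is to run the standard variation-of-parameters (telescoping) argument behind the Trotter--Kato theorem, which converts the difference of the two semigroups into an integral over the consistency error. Fix $t>0$ and $f\in Y$, and introduce the $X_n$-valued auxiliary function $g(s):=T_n(t-s)P_nT(s)f$ for $s\in[0,t]$. Its endpoints are precisely the two quantities we wish to compare: $g(0)=T_n(t)P_nf$ and $g(t)=P_nT(t)f$, so that $P_nT(t)f-T_n(t)P_nf=g(t)-g(0)$.

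Next I would differentiate $g$ and integrate back. Because $f\in Y\subset D(A)$ and $Y$ is invariant under $T$, we have $T(s)f\in Y$ for every $s$; hence $s\mapsto T(s)f$ is differentiable with derivative $T(s)Af$, while at the same time $P_nT(s)f\in D(A_n)$, which is exactly what makes the factor $T_n(t-s)$ differentiable in the relevant direction. The product rule together with the commutation of a generator with its own semigroup then gives
\[
g'(s)=T_n(t-s)\bigl(P_nA-A_nP_n\bigr)T(s)f=-T_n(t-s)\bigl(A_nP_n-P_nA\bigr)T(s)f,
\]
and integrating over $[0,t]$ yields the representation
\[
P_nT(t)f-T_n(t)P_nf=-\int_0^tT_n(t-s)\bigl(A_nP_n-P_nA\bigr)T(s)f\dd s.
\]

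With this identity the estimate is immediate. Taking norms and inserting the stability bound $\|T_n(t-s)\|\leq M\ee^{\omega(t-s)}$ from \eqref{c:apro1.eq:stability}, the consistency hypothesis applied to $T(s)f\in Y$, and the bound $\|T(s)f\|_Y\leq M\ee^{\omega s}\|f\|_Y$ coming from $\|T(s)\|_Y\leq M\ee^{\omega s}$, I get
\[
\|P_nT(t)f-T_n(t)P_nf\|_{X_n}\leq\int_0^tM\ee^{\omega(t-s)}\,C\,\frac{M\ee^{\omega s}\|f\|_Y}{n^p}\dd s=\frac{CM^2t\,\ee^{\omega t}}{n^p}\,\|f\|_Y.
\]
This proves the claim with $C':=CM^2t\,\ee^{\omega t}$; since $t\mapsto CM^2t\,\ee^{\omega t}$ is continuous, it is bounded on any compact $t$-interval, which gives the asserted uniformity.

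The one genuinely delicate point, and the step I expect to cost the most care, is justifying the termwise differentiation of $g$ and the fundamental theorem of calculus when $A_n$ is unbounded. Writing $D_n:=A_nP_n-P_nA$, the hypothesis states exactly that $D_n\colon Y\to X_n$ is bounded with $\|D_n\|\leq C/n^p$; hence $s\mapsto D_nT(s)f$ is continuous as soon as $s\mapsto T(s)f$ is continuous in the $\|\cdot\|_Y$-norm, i.e.\ as soon as $T$ restricts to a strongly continuous semigroup on $Y$, which is the reading of the invariance and boundedness assumptions placed on $Y$. Together with the strong continuity and local boundedness of $T_n$ this makes $s\mapsto g'(s)$ continuous, so that $g\in C^1([0,t],X_n)$ and the integral representation is legitimate; once this regularity is secured, all remaining steps are the elementary norm estimate above.
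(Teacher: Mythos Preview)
Your argument is correct and is exactly the standard variation-of-parameters proof behind this Trotter--Kato variant. The paper does not actually supply a proof of this proposition---it is cited from \cite[Proposition~3.8]{Batkai-Csomos-Farkas-Ostermann}---but the integral representation you derive,
\[
P_nT(t)f-T_n(t)P_nf=\int_0^tT_n(t-s)\bigl(P_nA-A_nP_n\bigr)T(s)f\dd s,
\]
is precisely the formula the paper invokes (with reference to \cite[Lemma~5]{Batkai-Kiss-Sikolya-Simon}) in the proof of the subsequent Lemma~\ref{lem:anal_approx}, so your approach coincides with the intended one.
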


This result can be slightly improved in case analytic semigroups are involved.

\begin{lemma}\label{lem:anal_approx}
Suppose that the conditions of Proposition \ref{prop:appr_first_gen} are satisfied and that $A$ generates an analytic semigroup. If there is $\varepsilon\in (0,1)$ and there are spaces $Y\hookrightarrow \dom(A)\hookrightarrow Z\hookrightarrow X$ such that $T(s)Z\subset Y$ for all $s>0$ and
\begin{equation*}
\|T(s)\|_{\mathcal{L}(Z,Y)}\leq \frac{M}{s^{1-\varepsilon}}
\end{equation*}
holds, then
\begin{equation*}
\|T_n(t)P_n f - P_nT(t)f\|_{X_n}\leq C'\frac{\|f\|_Z}{n^p}
\end{equation*}
for all $n\in \NN$ and $f\in Z$.
\end{lemma}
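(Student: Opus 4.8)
The key idea is that Proposition~\ref{prop:appr_first_gen} already gives the convergence estimate for the dense invariant subspace $Y$, with the $Y$-norm appearing on the right. I want to upgrade this to an estimate where the weaker $Z$-norm appears, at the cost of assuming analyticity and the smoothing bound $\|T(s)\|_{\mathcal{L}(Z,Y)}\leq M s^{\varepsilon-1}$. The natural mechanism is a splitting of the interval $[0,t]$: for fixed $f\in Z$ and a small time $s\in(0,t)$, I would write
\[
T_n(t)P_n f - P_n T(t) f = \bigl(T_n(t)P_n f - T_n(t-s)P_n T(s) f\bigr) + \bigl(T_n(t-s)P_n T(s) f - P_n T(t) f\bigr),
\]
so that the smoothing property $T(s)Z\subset Y$ turns $f\in Z$ into the element $T(s)f\in Y$, to which Proposition~\ref{prop:appr_first_gen} applies.

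\textbf{Estimating the two pieces.} For the second bracket, I rewrite $P_n T(t) f = P_n T(t-s) T(s) f$ and apply Proposition~\ref{prop:appr_first_gen} to $T(s)f\in Y$ at time $t-s$, obtaining a bound $C' n^{-p}\|T(s)f\|_Y$. Using the smoothing estimate, $\|T(s)f\|_Y\leq M s^{\varepsilon-1}\|f\|_Z$, so this piece is controlled by $C' M s^{\varepsilon-1} n^{-p}\|f\|_Z$. For the first bracket, I factor out $T_n(t-s)$ and use the stability bound from Assumption~\ref{c:apro1.ass:approx_gener},
\[
\|T_n(t)P_n f - T_n(t-s)P_n T(s) f\| = \|T_n(t-s)\bigl(T_n(s)P_n f - P_n T(s) f\bigr)\| \leq M\ee^{\omega(t-s)}\|T_n(s)P_n f - P_n T(s) f\|,
\]
which reduces matters to bounding the approximation error at the small time $s$ in terms of $\|f\|_Z$.

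\textbf{The main obstacle.} The delicate point is the first bracket: at the small time $s$ I cannot simply invoke Proposition~\ref{prop:appr_first_gen} with the $Z$-norm, since that proposition is phrased with the $Y$-norm. The cleanest route is to choose $s$ depending on $n$, for instance $s=n^{-q}$ for a suitable exponent $q$, so that the short-time error $T_n(s)P_n f - P_n T(s) f$ is itself small. One estimates this difference through the integral representation of the error (the standard telescoping identity $T_n(s)P_n - P_n T(s) = \int_0^s T_n(s-r)\bigl(A_n P_n - P_n A\bigr)T(r)\,dr$ applied on $Y$), and then balances the resulting $s$-dependent factors against the $s^{\varepsilon-1}$ singularity from the second piece. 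The crux is to verify that the two contributions can be made simultaneously of order $n^{-p}\|f\|_Z$: choosing $s$ to equalize the competing powers of $s$ and $n$, using $\varepsilon\in(0,1)$ to guarantee that the singular integral $\int_0^s r^{\varepsilon-1}\,dr = s^\varepsilon/\varepsilon$ converges. Once the optimal $s$ is selected and absorbed into the constant $C'$, the stated estimate follows for all $f\in Z$ by density and boundedness of the operators involved.
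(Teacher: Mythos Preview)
Your approach is workable but takes an unnecessary detour compared with the paper's proof. The paper does not split $[0,t]$ at all: it applies the telescoping identity
\[
P_nT(t)f - T_n(t)P_n f = \int_0^t T_n(t-s)\bigl(P_nA - A_nP_n\bigr)T(s)f\,\dd s
\]
directly over the whole interval, uses the consistency bound $\|(P_nA-A_nP_n)g\|\leq Cn^{-p}\|g\|_Y$ with $g=T(s)f$, and then invokes the smoothing hypothesis $\|T(s)f\|_Y\leq Ms^{\varepsilon-1}\|f\|_Z$ inside the integral. Since $\int_0^t s^{\varepsilon-1}\dd s=t^\varepsilon/\varepsilon<\infty$, this already gives the claimed estimate in one step. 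Notice that this is exactly the computation you propose for your \emph{first} bracket on $[0,s]$; once you write it down, you see it works on all of $[0,t]$, and the second bracket (with Proposition~\ref{prop:appr_first_gen}) becomes superfluous.

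There is also a concrete misstep in your balancing discussion. After you estimate the first bracket via the integral representation, both pieces already carry the factor $n^{-p}$: the first is of order $n^{-p}s^{\varepsilon}\|f\|_Z$ and the second of order $n^{-p}s^{\varepsilon-1}\|f\|_Z$. There is nothing to balance against $n$; any \emph{fixed} $s\in(0,t)$ (say $s=t/2$) gives the desired bound. Your suggested choice $s=n^{-q}$ with $q>0$ actually \emph{worsens} the second term to order $n^{-p+q(1-\varepsilon)}$. So while your strategy can be salvaged by simply fixing $s$, the $n$-dependent cut is a red herring, and the cleanest route is the paper's single integral.
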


Note that this condition is for example satisfied if there is $\alpha\in (0,1)$ so that $Y=\dom((I-A)^{1+\alpha})$ and $\dom(A)\hookrightarrow Z\hookrightarrow \dom((I-A)^{\alpha+\varepsilon})$ holds.

\begin{proof}
As in the proof of B\'atkai, Kiss, Sikolya and Simon \cite[Lemma 5]{Batkai-Kiss-Sikolya-Simon}, we have the representation
\begin{equation*}
\left(P_nT(t)-T_n(t)P_n\right)f = \int_0^t T_n(t-s)\left(P_nA-A_nP_n\right)T(s)f\dd s
\end{equation*}
for all $f\in \dom(A)$. Hence, using the analyticy of the semigroup $T$, we obtain the norm estimate
\begin{multline*}
\left\|P_nT(t)f-T_n(t)P_n f\right\| \leq \int_0^t M\ee^{\omega (t-s)}\|(P_nA-A_nP_n)T(s)f\| \dd s \\
\leq \int_0^t M' C\frac{\|T(s)f\|_Y}{n^p}\dd s \leq M''\frac{\|f\|_Z}{n^p}\int_0^t \frac{1}{s^{1-\varepsilon}}\dd s,
\end{multline*}
where the constants $M'$, $C'$ and $M''$ only depend on $t>0$.
\end{proof}

We have seen in the calculations of the previous sections that our approximation is of third order in the interior of the interval and of second order on the boundary. Let us formalize now the calculations and put them into the general framework presented above.

\subsection{Dynamic boundary condition}

Our aim is now to show that for sufficiently smooth initial values the derived partial differential equation \eqref{eq:pde_main} with dynamic boundary conditions \eqref{eq:pde_boundary_left} and \eqref{eq:pde_boundary_right} is the right approximation to the ordinary differential equation \eqref{eq:ode}.

As a first step, we have to associate to the partial differential equation \eqref{eq:pde_main} with boundary conditions \eqref{eq:pde_boundary_left} and \eqref{eq:pde_boundary_right} a Banach space $X$ and a generator $A$. Following the approach of Engel \cite{Engel, Engel1} or B\'atkai and Engel \cite{Batkai-Engel}, we introduce the spaces
\begin{equation*}
X:=  C[0,1],
\end{equation*}
and
\[
\widetilde{X}:=\left\{\left(
\begin{smallmatrix}
f\\y
\end{smallmatrix}\right)
\in X\times\CC^2
\left|y=(f(0),f(1))^T\right.\right\}.
\]
%
Let us also consider the operators
\begin{multline*}
(D_m f)(z):= \frac{h^2}{2}\left(a\left(z-h\right)+c\left(z+h\right)\right) f''(z) \\ +h(c(z+h)-a(z-h))f'(z) +(a(z-h)+b(z)+c(z+h)) f(z)
\end{multline*}
defined on its maximal possible domain, $\dom(D_m):=C^{2}[0,1]$, and
\begin{equation*}
Bf:=\left(\begin{smallmatrix} hc(h) f'(0) + (c(h)+b(0))f(0) \\ -ha(1-h)f'(1) + (a(1-h)+b(1))f(1) \end{smallmatrix}\right)
\end{equation*}
defined on $\dom(D_m)$ and mapping to $\CC^2$. Our associated operator should be
\begin{equation*}
A f = D_m f  \quad \text{ with } \dom(A) :=\left\{f\in \dom(D_m)\,:\, \left(\begin{smallmatrix} D_mf(0), D_mf(1)\end{smallmatrix} \right)^T=Bf \right\}.
\end{equation*}
Note that the operator
\begin{equation*}
\tilde A= \left.\begin{pmatrix} A & 0 \\ B & 0 \end{pmatrix}\right|_{\tilde X}
\end{equation*}
is similar to the operator $A$, see B\'atkai and Engel \cite{Batkai-Engel}.
Further, for a function $f\in C[0,1]$ we introduce the notation
\begin{equation*}
f_N:=(f(0),f(\tfrac{1}{N}),\ldots,f(1))^T\in\CC^{N+1}.
\end{equation*}

After all these preparations, we can state the main result of this Section.

\begin{theorem}\label{thm:main}
Consider the ordinary differential equation given by \eqref{eq:ode} and the approximating partial differential equation \eqref{eq:pde_main} with dynamic boundary conditions \eqref{eq:pde_boundary_left} and \eqref{eq:pde_boundary_right}, where $v= u_N(0)$. If there is $\varepsilon\in (0,\tfrac{1}{2})$ such that $u(0,\cdot)\in \dom((I-A)^{\frac{1}{2}+\varepsilon})$, then for all $T>0$ there is $C=C(T)>0$ such that for all $t\in (0,T]$ we get
\begin{equation}\label{eq:thm_main}
\|u_N(t,\cdot)- x(t)\|_{\infty} \leq \frac{C}{N^2}\|u(0,\cdot)\|_{\dom((I-A)^{\frac{1}{2}+\varepsilon})}.
\end{equation}
\end{theorem}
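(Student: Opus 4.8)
The plan is to realize the PDE with dynamic boundary conditions as a generator on the product space $\widetilde{X}$ and then invoke the improved Trotter--Kato result of Lemma \ref{lem:anal_approx}. The author has already set up the operator matrix framework: the generator $A$ encodes the second-order operator $D_m$ together with the boundary condition $(D_mf(0),D_mf(1))^T=Bf$. My first step is therefore to verify that $A$, realized on $\widetilde{X}$ in the Engel--B\'atkai operator-matrix form, generates an analytic semigroup. This is the place where the cited theory of Wentzell (dynamic) boundary conditions from \cite{Engel,Engel1,Batkai-Engel} does the heavy lifting: the diffusion coefficient $\tfrac{h^2}{2}(a(z-h)+c(z+h))$ should be (strictly) positive so that $D_m$ is uniformly elliptic, and then the abstract results yield analyticity of $T(t)$ on $\widetilde{X}$.

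Next I would construct the discretization data to fit Assumptions \ref{c:apro1.ass:approx_space} and \ref{c:apro1.ass:approx_gener}. Here $X_n:=\CC^{N+1}$ (with $N$ playing the role of $n$, and $h=\tfrac{1}{N}$), equipped with the sup-norm; the restriction $P_N f := f_N = (f(0),\dots,f(1))^T$ and a matching interpolation $J_N$ (e.g. piecewise linear interpolation of the nodal values), which plainly satisfy $\|P_N\|,\|J_N\|\le K$, $P_NJ_N=I_N$, and $J_NP_Nf\to f$ uniformly by uniform continuity. The discrete generators are $A_n=A_N$, the tridiagonal matrices, and the stability bound \eqref{c:apro1.eq:stability} must be checked uniformly in $N$ — this follows because the $A_N$ are consistent discretizations of a generator of a bounded (up to exponential) analytic semigroup, so their numerical ranges stay in a fixed left half-plane; concretely one can use Gershgorin-type estimates on $A_N$ to bound $\|T_N(t)\|\le M\ee^{\omega t}$.

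The analytic core of the argument is the consistency estimate
\begin{equation*}
\|A_NP_Nf - P_NAf\|_{\infty}\le C\frac{\|f\|_Y}{N^p}.
\end{equation*}
This is exactly where the computations of Sections 2 are reused: for interior indices $1\le k\le N-1$ the finite-difference replacements have truncation error $O(h^3)=O(N^{-3})$, while at the two boundary indices $k=0,N$ the order drops to $O(h^2)=O(N^{-2})$. Thus the naive estimate gives order $p=2$ (limited by the boundary), matching the $N^{-2}$ in \eqref{eq:thm_main}, and one takes $Y$ to be a space of sufficiently smooth functions so that the $C^3$-type bounds on the truncation error are controlled by $\|f\|_Y$. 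The subtlety is that a direct application of Proposition \ref{prop:appr_first_gen} would require $f\in Y$ with $Y\hookrightarrow\dom(A)$; to weaken this to the fractional-regularity hypothesis $u(0,\cdot)\in\dom((I-A)^{\frac12+\varepsilon})$ I invoke Lemma \ref{lem:anal_approx} with $Z:=\dom((I-A)^{\frac12+\varepsilon})$ and $Y:=\dom((I-A)^{1+\alpha})$, using the smoothing property $\|T(s)\|_{\mathcal L(Z,Y)}\le M s^{-(1-\varepsilon)}$ of the analytic semigroup (as noted in the remark following the lemma, with $\alpha$ chosen so that $\tfrac12+\varepsilon$ and $1+\alpha$ are in the right relation). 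This buys the integrability in the Duhamel integral and yields the $N^{-2}$ rate with the stated $Z$-norm on the right.

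The main obstacle I expect is reconciling the two norms: the theorem measures the error in $\|u_N(t,\cdot)-x(t)\|_{\infty}$ on $\CC^{N+1}$, whereas the abstract generator $A$ lives on the \emph{product} space $\widetilde{X}=\{(f,y):y=(f(0),f(1))^T\}$ because of the dynamic boundary conditions. Care is needed so that the consistency estimate on $\widetilde{X}$, which involves both the bulk component and the two boundary components, collapses correctly to the scalar finite-difference errors computed in Section 2 — in particular the boundary components of $A_NP_N-P_NA$ are governed by \eqref{eq:pde_boundary_left}--\eqref{eq:pde_boundary_right}, whose $O(h^2)$ error is precisely the bottleneck fixing $p=2$. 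Once the product-space bookkeeping is set up so that $P_N$, $J_N$, and the boundary rows of $A_N$ are compatible with the operator-matrix structure of $A$, the estimate \eqref{eq:thm_main} drops out of Lemma \ref{lem:anal_approx}, uniformly for $t\in(0,T]$.
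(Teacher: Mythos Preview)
Your overall architecture matches the paper's: realize $A$ as an analytic generator via the Engel--B\'atkai operator-matrix theory, set up $P_N,J_N$ satisfying Assumptions~\ref{c:apro1.ass:approx_space}, read off the interior $O(h^3)$ and boundary $O(h^2)$ truncation errors from Section~2 to get $p=2$, and then upgrade from $Y$-data to $Z=\dom((I-A)^{\frac12+\varepsilon})$-data via Lemma~\ref{lem:anal_approx}.

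There is, however, one genuine gap. The consistency estimate you quote is expressed concretely in terms of $\|f'''\|_\infty$ (interior) and $\|f''\|_\infty$ (boundary), so the $Y$ in Lemma~\ref{lem:anal_approx} must embed into $C^3$ in the interior. You first say ``take $Y$ to be a space of sufficiently smooth functions'' and then, separately, ``$Y:=\dom((I-A)^{1+\alpha})$''; but nowhere do you verify that these are compatible, i.e., that $\dom((I-A)^{1+\alpha})\hookrightarrow C^3(0,1)$. This is not automatic from the abstract analytic-semigroup machinery: the remark after Lemma~\ref{lem:anal_approx} only tells you that the \emph{smoothing rate} $\|T(s)\|_{\mathcal L(Z,Y)}\le M s^{-(1-\varepsilon)}$ holds for such fractional domains, not that those domains carry the concrete $C^3$-regularity the truncation estimate needs. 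The paper closes this gap by an explicit computation: writing $B=I-A$, it shows $\dom(B^{1/2})\subset C^1(0,1)$ (hence $\dom(B^{3/2})\subset C^2[0,1]\cap C^3(0,1)$) by representing $f=B^{-1/2}g$ through the Balakrishnan formula $f=\int_0^\infty \lambda^{-1/2}R(\lambda+1,A)g\,\mathrm d\lambda$, inserting the explicit convolution kernel of the resolvent, and differentiating under the integral. Without this step (or an equivalent characterization of the fractional domain), your application of Lemma~\ref{lem:anal_approx} with $\alpha=\tfrac12$ does not connect to the $C^3$-based consistency bound, and the argument does not close.
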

\begin{proof}

By Engel \cite{Engel} or B\'atkai and Engel \cite[Remark 4.4]{Batkai-Engel}, the operator $A$ generates an analytic semigroup of angle $\frac{\pi}{2}$ in the space $X$, and this semigroups gives the solutions of the partial differential equation \eqref{eq:pde_main} with dynamic boundary conditions \eqref{eq:pde_boundary_left} and \eqref{eq:pde_boundary_right}.

Further, we introduce the spaces
\begin{equation*}
X_N:=\CC^{N-1}\times \CC^2
\end{equation*}
and define the operators $P_N:\widetilde{X}\to X_N$ as
\begin{equation*}
P_N(f,y):=(f_N,y)\,
\end{equation*}
Clearly, these operators and spaces satisfy the conditions in Assumptions \ref{c:apro1.ass:approx_space}.

Abusing the matrix notation, define now on $X_N\simeq\CC^{N+1}$ the operator
\begin{equation*}
\tilde A_N:=\left(\begin{array}{ccccccc}
b_1&c_2&\cdots&0&0&a_0 &0\\
a_1&b_2&&0 &0&0&0\\
\vdots& & &\ddots& & & \vdots\\
0& 0& &b_{N-2}&c_{N-1} &0 &0\\
0& 0& &a_{N-2}&b_{N-1} &0 &c_N\\
 c_1&0&\cdots&0&0& b_0& 0\\
0& 0 &\cdots &0 &a_{N-1} &0&b_N\\
\end{array}\right).
\end{equation*}

Taking $\tbinom{f}{y}\in\dom(\tilde A)$ (i.e., $f\in W^{1,1}(0,1)$ and $y_1=f(0),\, y_2=f(1)$), we see that

\begin{equation*}
\tilde A_N P_N \tbinom{f}{y} =
\begin{pmatrix}
a_0f(0)+b_1 f(\tfrac{1}{N})+ c_2 f(\tfrac{2}{N})\\
a_1 f(\tfrac{1}{N})+ b_2 f(\tfrac{2}{N})+c_3 f(\tfrac{3}{N})\\
\vdots \\
a_{N-2} f(\tfrac{N-2}{N})+ b_{N-1} f(\tfrac{N-1}{N})+c_N f(1)\\
b_0f(0)+c_1 f(\tfrac{1}{N})\\
a_{N-1} f(\tfrac{N-1}{N})+b_N f(1)
\end{pmatrix}
\end{equation*}
and that
\begin{align*}
(P_N \tilde A\tbinom{f}{y})_k &=
\tfrac{1}{2N^2} (a(\tfrac{k-1}{N})f(\tfrac{k-1}{N})-c(\tfrac{k+1}{N})f(\tfrac{k+1}{N}))f''(\tfrac{k}{N})  \\ &+\tfrac{1}{N}(a(\tfrac{k+1}{N})f(\tfrac{k+1}{N})-c(\tfrac{k-1}{N})f(\tfrac{k-1}{N}))f'(\tfrac{k}{N}) \\
&+ (a(\tfrac{k-1}{N})+b(\tfrac{k}{N})+c(\tfrac{k+1}{N})f(\tfrac{k}{N})
\end{align*}
for $k=1,2,\ldots,N-1$, and
\begin{align*}
(P_N \tilde A\tbinom{f}{y})_N =& \tfrac{1}{N}c(\tfrac{1}{N})f'(0)+ (c(\tfrac{1}{N})+ b(0))f(0)\\
(P_N \tilde A\tbinom{f}{y})_{N+1} =& -\tfrac{1}{N}c(\tfrac{N-1}{N})f'(1)+ (a(\tfrac{N-1}{N})+ b(1))f(1)
\end{align*}

By the calculations in the previous Section \ref{sec:dyn}, we see that there is $C>0$ such that
\begin{align*}
|(P_N \tilde A\tbinom{f}{y})_k - (\tilde A_NP_N \tbinom{f}{y})k| \leq& \frac{C}{N^3}\|f'''\|_{\infty},\\
|(P_N \tilde A\tbinom{f}{y})_N - (\tilde A_NP_N \tbinom{f}{y})N| \leq& \frac{C}{N^2}\|f''\|_{\infty},\\
|(P_N \tilde A\tbinom{f}{y})_{N+1} - (\tilde A_NP_N \tbinom{f}{y})_{N+1}| \leq& \frac{C}{N^2}\|f''\|_{\infty}\\
\end{align*}
hold. Since $A$ generates an analytic semigroup, it leaves $Y=C^3[0,1]$ invariant. Hence, Proposition \ref{prop:appr_first_gen} is applicable with $Y=C^3[0,1]$ and we obtain the desired estimate for all $u(0,\cdot)\in Y$. To improve this result and relax the regularity assumption on the initial value, we use the analyticity of the semigroup and Lemma \ref{lem:anal_approx} with $\alpha=\tfrac{1}{2}$.

Introducing the notation $B=I-A$, our aim now is to show that $\dom(B^{3/2})\subset C^2[0,1]\cap C^3(0,1)$. Since $\dom(B)\subset C^2[0,1]$, it is enough to show that $\dom(B^{1/2})\subset C^1(0,1)$.
Let $f\in \dom(B^{1/2})$ such that $g=B^{1/2}f$. Then by Engel and Nagel \cite[Corollary II.5.28]{EN:00},
\begin{equation*}
f=\int_0^\infty \frac{1}{\sqrt\lambda} R(\lambda+1,A)g \dd\lambda.
\end{equation*}
Further, by checking the explicit representation of the resolvent as in the proof in Engel and Nagel \cite[Theorem VI.4.5]{EN:00}, we see that the resolvent is given by the combination of exponential terms and a convolution term. Since we are in the interior of the domain, we can drop the exponential terms because they do not disturb regularity and concentrate on the convolution term. Hence we may assume that
\begin{equation*}
f=\int_0^\infty \frac{1}{\sqrt\lambda}\frac{1}{2\sqrt{\lambda+1}}\int_0^1 e^{-\sqrt{\lambda+1}|\cdot-s|}g(s) \dd s\dd\lambda.
\end{equation*}
Rewriting, we obtain
\begin{align*}
f(x)&=\int_0^\infty \frac{1}{2\sqrt\lambda\sqrt{\lambda+1}}\left\{\int_0^x e^{-\sqrt{\lambda+1}(x-s)}g(s) \dd s\right. \\
&\qquad\qquad + \left.\int_x^1 e^{-\sqrt{\lambda+1}(s-x)}g(s) \dd s\right\}\dd\lambda\\
&=\int_0^\infty \frac{1}{2\sqrt\lambda\sqrt{\lambda+1}}\left\{e^{-\sqrt{\lambda+1}x}\int_0^x e^{\sqrt{\lambda+1}s}g(s) \dd s \right. \\ &\qquad\qquad + \left.e^{\sqrt{\lambda+1}x} \int_x^1 e^{-\sqrt{\lambda+1}s}g(s) \dd s\right\}\dd\lambda.
\end{align*}
Formally differentiating with respect to $x$ behind the first integral, we obtain
\begin{eqnarray*}
\int_0^\infty \frac{1}{2\sqrt\lambda\sqrt{\lambda+1}}
&&\left\{\frac{-1}{\sqrt{\lambda+1}}e^{-\sqrt{\lambda+1}x}\int_0^x e^{\sqrt{\lambda+1}s}g(s) \dd s+g(x)\right.\\
&&+\left.\frac{1}{\sqrt{\lambda+1}}e^{\sqrt{\lambda+1}x} \int_x^1 e^{-\sqrt{\lambda+1}s}g(s) \dd s-g(x)\right\}\dd\lambda\\
=\int_0^\infty\frac{1}{2\sqrt{\lambda(\lambda+1)}}&&
\left\{\frac{-1}{\sqrt{\lambda+1}}e^{-\sqrt{\lambda+1}x}\int_0^x e^{\sqrt{\lambda+1}s}g(s) \dd s\right.\\
&&+\left.\frac{1}{\sqrt{\lambda+1}}e^{\sqrt{\lambda+1}x} \int_x^1 e^{-\sqrt{\lambda+1}s}g(s) \dd s\right\}\dd\lambda.
\end{eqnarray*}
Since this improper integral converges uniformly in $x$ on any closed subinterval of $(0,1)$, and depends continuously on $x$, the function $f$ is indeed continuously differentiable on $(0,1)$.
\end{proof}

\subsection{Robin boundary condition}

Using analogous argument, we can prove the approximating property of the PDE with Robin boundary conditions. To this end, we introduce the space $X=C[-\frac{h}{2},1+\frac{h}{2}]$ and the operator

\begin{equation*}
Af(z):=\frac{\dd^2}{\dd z^2}\left(h^2\frac{a(z)+c(z)}{2}f(z)\right)+\frac{\dd}{\dd z}\left(h\frac{c(z)-a(z)}{2}f(z)\right),
\end{equation*}
with domain
\begin{multline*}
D(A):=\big\{f\in C^1[-\tfrac{h}{2},1+\tfrac{h}{2}]\cap C^2(-\tfrac{h}{2},1+\tfrac{h}{2})\,:\\
 \frac{\dd}{\dd z}\big(h^2\frac{a(z)+c(z)}{2}f(z)\big)+\big(h\frac{c(z)-a(z)}{2}f(z)\big) = 0 \\
 \text{ for } z=-\tfrac{h}2, 1+\tfrac{h}{2}\big\}.
\end{multline*}
Further, as before, for a function $f\in C[0,1]$ we use the notation
\begin{equation*}
f_N:=(f(0),f(\tfrac{1}{N}),\ldots,f(1))^T\in\CC^{N+1}.
\end{equation*}

\begin{theorem}\label{thm:main2}
Consider the ordinary differential equation given by \eqref{eq:ode} and the approximating partial differential equation \eqref{eq:pde} with Robin-type boundary conditions \eqref{eq:bc1} and \eqref{eq:bc2}, where $v= u_N(0)$. If there is $\varepsilon\in (0,\tfrac{1}{2})$ such that $u(0,\cdot)\in \dom((I-A)^{\frac{1}{2}+\varepsilon})$, then for all $T>0$ there is $C=C(T)>0$ such that for all $t\in (0,T]$ we get
\begin{equation}\label{eq:thm_main2}
\|u_N(t,\cdot)- x(t)\|_{\infty} \leq \frac{C}{N^2}\|u(0,\cdot)\|_{\dom((I-A)^{\frac{1}{2}+\varepsilon})}.
\end{equation}
\end{theorem}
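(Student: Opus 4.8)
The plan is to mirror the proof of Theorem~\ref{thm:main} step by step, replacing the non-divergence form generator and the grid on $[0,1]$ by the divergence-form operator $A$ and the grid on the slightly enlarged interval $[-\tfrac h2,1+\tfrac h2]$. First I would record that $A$ generates an analytic semigroup on $X=C[-\tfrac h2,1+\tfrac h2]$: its leading coefficient $\alpha(z)=h^2\tfrac{a(z)+c(z)}{2}$ is strictly positive and the conditions defining $\dom(A)$ are exactly the conormal boundary conditions $\partial_z(\alpha f)+\beta f=0$ at $z=-\tfrac h2,\,1+\tfrac h2$, so that the generation results of Engel~\cite{Engel, Engel1}, B\'atkai and Engel~\cite{Batkai-Engel} and Engel and Nagel~\cite[Sec.~VI.4]{EN:00} apply, and the associated semigroup produces the solutions of~\eqref{eq:pde} with~\eqref{eq:bc1}--\eqref{eq:bc2}. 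Next I would fix the discrete data by setting $X_N:=\CC^{N+1}$, letting $P_N f:=f_N$ be evaluation at the grid points $0,\tfrac1N,\dots,1$, and taking $J_N$ to be a bounded interpolation extended in any fixed way onto the two shrinking flaps $[-\tfrac h2,0]$ and $[1,1+\tfrac h2]$; since these flaps collapse as $N\to\infty$, Assumptions~\ref{c:apro1.ass:approx_space} are verified routinely. The one genuine simplification relative to the dynamic case is that the scheme of Section~3 was constructed precisely so that the discretisation of $A$ reproduces~\eqref{eq:ode}; hence here the discrete generator may be taken to be $A_N$ itself, with no rearranged boundary block.

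With these choices the consistency estimate is nothing but the content of Section~3. Comparing $P_N A f$ with $A_N P_N f$ via the Taylor expansions behind~\eqref{eq:u_k}, the interior indices $k=1,\dots,N-1$ obey
\begin{equation*}
\big|(P_N A f)_k-(A_N P_N f)_k\big|\le\frac{C}{N^3}\,\|f'''\|_\infty,
\end{equation*}
whereas the elimination of the virtual values $x_{-1}$ and $x_{N+1}$ through~\eqref{eq:bc1}--\eqref{eq:bc2} costs one power of $h$, giving for the two end indices $k=0,N$
\begin{equation*}
\big|(P_N A f)_k-(A_N P_N f)_k\big|\le\frac{C}{N^2}\,\|f''\|_\infty,
\end{equation*}
exactly as in the display at the end of Section~3. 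The decisive structural point is that the grid points $0,\tfrac1N,\dots,1$, and in particular the two ``boundary'' points $z=0$ and $z=1$, all lie in the \emph{interior} of $[-\tfrac h2,1+\tfrac h2]$, so only interior regularity of $f$ enters. Thus the combined bound is $O(N^{-2})$ against the $C^3[0,1]$-norm, Proposition~\ref{prop:appr_first_gen} applies with $p=2$, and (recalling that $T_N(t)P_N$ is the flow of~\eqref{eq:ode} while $P_N T(t)$ samples the PDE solution) we obtain~\eqref{eq:thm_main2} for $C^3$ initial data. To lower the demand to $u(0,\cdot)\in\dom((I-A)^{\frac12+\varepsilon})$ I would invoke Lemma~\ref{lem:anal_approx} with $\alpha=\tfrac12$ and $Z=\dom((I-A)^{\frac12+\varepsilon})$, for which it remains to establish the embedding $\dom(B^{3/2})\hookrightarrow C^3[0,1]$, where $B:=I-A$.

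This fractional-domain embedding is the heart of the argument and the step I expect to be the main obstacle; it is handled exactly as in the proof of Theorem~\ref{thm:main}. For $f\in\dom(B^{3/2})$ one has $f\in\dom(B)\subset C^2$ in the interior and $Bf\in\dom(B^{1/2})$; since $A$ is second order with strictly positive leading coefficient $\alpha$, once $Bf\in C^1$ is known the equation $Bf=f-Af$ can be solved for $f''$, forcing $f\in C^3$. It therefore suffices to prove $\dom(B^{1/2})\subset C^1$ on compact subintervals of $(-\tfrac h2,1+\tfrac h2)$. For this I would write $f=\int_0^\infty\lambda^{-1/2}R(\lambda+1,A)g\dd\lambda$ with $g=B^{1/2}f$, using Engel and Nagel~\cite[Cor.~II.5.28]{EN:00}, and insert the explicit resolvent of the divergence-form operator from~\cite[Thm.~VI.4.5]{EN:00}: after freezing the principal part on a compact subinterval it splits into exponential boundary terms, which are smooth in the interior and may be dropped, and a convolution term against a kernel of the same exponential type $e^{-\sqrt{\lambda+1}\,|x-s|}$ up to the normalisation by $\alpha$. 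Differentiating under the integral, the two endpoint contributions of the kernel cancel the $\pm g(x)$ terms, leaving an improper integral that converges uniformly on compact subintervals of $(-\tfrac h2,1+\tfrac h2)$ and depends continuously on $x$; this gives $f\in C^1$ there and closes the argument. The only difference from Theorem~\ref{thm:main} is that $A$ is now in divergence form, which modifies solely the lower-order coefficients and hence leaves the interior kernel analysis untouched.
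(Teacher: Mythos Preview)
Your proposal is correct and mirrors exactly what the paper does: its proof of this theorem consists of the two sentences ``completely analogous'' plus a pointer to references for the generation result, and you have spelled out that analogy in detail, including the consistency bounds from Section~3, the application of Proposition~\ref{prop:appr_first_gen} and Lemma~\ref{lem:anal_approx}, and the resolvent computation for $\dom(B^{1/2})\subset C^1$. The one discrepancy is the references you invoke for analytic generation: the paper singles out the Robin-specific works of Arendt et al.~\cite{AMPR,AW} and Warma~\cite{W} rather than the Wentzell papers \cite{Engel,Engel1,Batkai-Engel}, so you should switch citations there.
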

\begin{proof}
The proof can be carried out in a completely analogous way as for the previous theorem. For second order differential operators with Robin-type boundary conditions we refer to the works by Arendt and coauthors \cite{AMPR,AW} or Warma \cite{W}.
\end{proof}


\section{Applications}

The main motivation of the previous theoretical investigation is to approximate a dynamic process on a network with a partial differential equation and to justify empirical observations. The network is usually given by an undirected graph and the process can be specified by the possible states of the nodes and the transition rate probabilities. The latter is the probability that the state of a node changes from one state to another depending on the states of the neighbouring nodes. In certain classes of models the complete state space can be reduced (using e.g. mean field approximations or structural symmetries), leading to tridiagonal systems.

In this section we show in two cases how the theory can be applied. The first one is the propagation of two opinions along a cycle graph, called a voter-like model, the second is an $SIS$ type epidemic propagation on a complete graph. As usual in the literature, in both models the natural Markov process is conditioned on not reaching the absorbing state(s).
\subsection{Voter-like model on a cycle graph}

Let us consider a cycle graph with $N+2$ nodes, i.e. we have a connected graph, in which each node has two neighbours. A node can be in one of two states, let us denote them by 0 and 1. These states represent two opinions propagating along the edges of the graph (see Holley and Liggett \cite{HolleyLiggett}). If a node is in state 0 and has $k$ neighbours in state 1 ($k=0,1,2$), then its state will change to 1 with probability $k\tau \Delta t$ in a small time interval $\Delta t$. This expresses that opinion 1 invades that node. The opposite case can also happen, that is a node in state 1 can become a node with opinion 0 with a probability $k\gamma \Delta t$ in a small time interval $\Delta t$, if it has $k$ neighbours in state 0. The parameters $\tau$ and $\gamma$ characterize the strengths of the two opinions. The model originates in physics, where in a network of interacting particles each node holds either spin 1 or -1 (see Vazquez and Eguiluz \cite{VazquezEguiluz}). In a single event, a randomly chosen node adopts the spin of one of its neighbors, also chosen at random.

Assuming that at the initial instant the territories of the two opinions are connected sets, the underlying conditioned Markov chain can be given as follows. The state space is the set $\{ 0,1, 2, \ldots , N\}$, where a number $k$ represents the state in which there are $k+1$ nodes in state 1 and they form a connected arc along the cycle graph. Starting from state $k$ the system can move either to state $k+1$ or to $k-1$, since at a given instant only one node can change its state (by using the usual assumption that the changes at the nodes can be given by independent Poisson processes). When the system moves from state $k$ to $k+1$ then a new node in state 1 appears at one of the two ends of the arc of state 1 nodes. Hence the rate of this transition is $2\tau$, expressing that a node in state 0 and having a single neighbour in state 1 becomes a state 1 node, and this can happen at both ends of the state 1 territory. Similarly, the rate of transition from state $k$ to $k-1$ is $2\gamma$. Let us denote by $x_k(t)$ the probability that the system is in state $k$. The above transition rates lead to the differential equation
\begin{equation*}
\dot x(t) = 2\tau x_{k-1} (t) - 2(\tau+\gamma) x_k(t) + 2\gamma x_{k+1}(t) .
\end{equation*}
(For $k=0$ and for $k=N$ the equations contain only two terms.) Thus our system of ODEs takes the form given in (\ref{eq:ode}) with $a\equiv 2\tau$, $c\equiv 2\gamma$ and $b|_{[1/N,1-1/N]}\equiv -2(\tau+\gamma)$, $b(0)=-2\tau$, $b(1)=-2\gamma$, yielding to the differential equation
\begin{equation}\label{eq:odeVoter}
\dot{x}(t)=A_v x(t)
\end{equation}
with the matrix
\begin{equation*}
A_v=2\left(\begin{array}{ccccccc}
-\tau & \gamma& 0&\cdots&0&0&0\\
\tau &-(\tau+\gamma)&\gamma&\cdots&0&0&0\\
0&\tau&-(\tau+\gamma)&&0 &0&0\\
\vdots& & &\ddots& & & \vdots\\
0& 0&0& &-(\tau+\gamma)&\gamma&0\\
0& 0&0& &\tau&-(\tau+\gamma)&\gamma\\
0& 0&0 &\cdots&0 &\tau&-\gamma\\
\end{array}\right)
\end{equation*}
subject to the initial condition $x(0)=v \in\CC^{N+1}$. Using (\ref{eq:pde_main}) the corresponding approximating PDE is then given by:
\begin{equation}\label{eq:pdeVoter}
\left\{\begin{aligned}
\partial_t u(t,z)&= (\tau+\gamma)h^2\partial_{zz} u(t,z)+2(\gamma-\tau)h\partial_z u(t,z)\\
\partial_t u(t,0)&= 2\gamma h\partial_z u(t,0) + 2(\gamma-\tau) u(t,0)\\
\partial_t u(t,1)&= -2\tau h\partial_z u(t,1) - 2(\gamma-\tau) u(t,1).
\end{aligned}\right.
\end{equation}

To illustrate the effectiveness of our method numerically, we consider the special case of $\tau=\gamma=\alpha/2$, leading to the simplified equations
\begin{equation}\label{eq:pdeVoter2}
\left\{
\begin{aligned}
\partial_t u(t,z)&=\alpha h^2\partial_{zz} u(t,z)\\
\partial_t u(t,0)&=\alpha h\partial_z u(t,0)\\
\partial_t u(t,1)&=-\alpha h\partial_z u(t,1),
\end{aligned}
\right.
\end{equation}
where the associated generator has all its eigenvalues in $(-\infty,0]$.
Wishing to apply the Fourier method, we look for the solution in the form
\[
u(t,z)=\sum_{j=0}^\infty c_je^{\lambda_jt}w_j(z)
\]
It is enough to find the eigenfunctions $e^{\lambda_jt}w_j(z)$. The PDE and the boundary conditions then yield the system of equations
\begin{equation*}
\left\{
\begin{aligned}
\lambda w&=\alpha h^2 w''\\
\lambda w(0)&=\alpha hw'(0)\\
\lambda w(1)&=-\alpha hw'(1).
\end{aligned}
\right.
\end{equation*}
The first equation yields
\[
w_j(z)=c_{1,\lambda_j} \cos (\omega_j z/h) +c_{2,\lambda_j} \sin(\omega_j z/h),
\]
with $\lambda_j=-\omega_j^2$, $\omega_j\geq 0$. Substituting into the first boundary condition we obtain $-\omega_j c_{1,\lambda_j}=c_{2,\lambda_j}$, allowing us to choose $c_{1,\lambda_j}=1$ and hence write
\[
w_j(z)=\cos (\omega_j z/h) -\omega_j \sin(\omega_j z/h).
\]
 Now substituting into the second boundary condition we obtain
\[
\tan\left(\frac{\omega_j}{h}\right)=\frac{2\omega_j}{\omega_j^2-1}.
\]
This has exactly one solution in each interval $\left((2j-1)h\frac{\pi}{2},(2j+1)h\frac{\pi}{2}\right)$ for $j\geq 0$.
The constants $c_j$ are determined by the initial condition
\[
u(0,z)=\sum_{j=0}^\infty c_j w_j(z).
\]

Introducing the infinite matrix $G=((\langle w_k,w_l\rangle)_{k,l})$ , where $\langle\cdot,\cdot\rangle$ is the $L^2$ scalar product, and the vectors $U=(\langle w_j,u(0,\cdot)\rangle_j)$ and $c=(c_j)_j$, this leads to the equation
\begin{equation}\label{eq:Fourier}
Gc=U
\end{equation}
for the Fourier coefficients of the solution.

\begin{figure}[h!]
\begin{center}
\includegraphics[width=10cm]{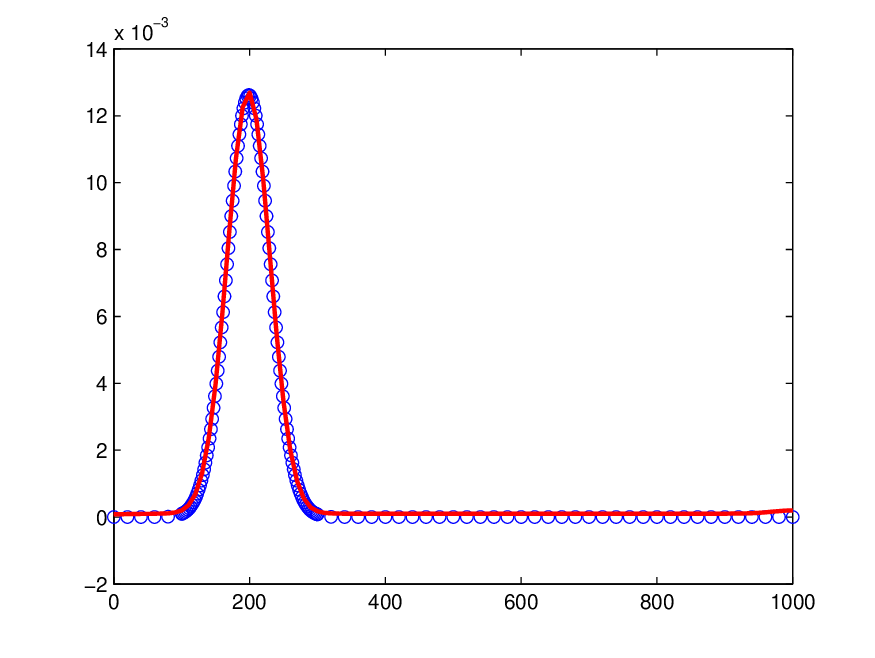}
\caption{The probability distribution $x_k(t)$, $k=0,1, 2, \ldots
, N$ at time $t=500$ obtained from system (\ref{eq:odeVoter})
(circles) and the solution $z\mapsto u(t,z)$ of the PDE
(\ref{eq:pdeVoter}) at time $t=500$ (continuous line), with
initially 200 nodes in state 1 with probability 1, and with
$N=1000$, $\tau=0.5$, $\gamma=0.5$.} \label{fig1}
\end{center}
\end{figure}

In Figure \ref{fig1} the solution of system (\ref{eq:odeVoter}) is compared to the
solution of the PDE (\ref{eq:pdeVoter}) when $\tau=\gamma=0.5$, the latter was
plotted using the Fourier method with the first 40 eigenfunctions. The first 40
eigenvalues were determined by using Newton's method within each interval given
above, and then we solved equation (\ref{eq:Fourier}) restricted to the first 40
variables. We observed that on our desktop computer MATLAB needed 15.719000 seconds
to get the ODE solution at $t=100$,
while for the Fourier method 0.016000 seconds were needed to solve the PDE.

We also compared the solutions of the ODE and the PDE for the
Robin-type boundary condition. For the Voter-like model equation
\eqref{eq:pde} has the form
\begin{equation}\label{eq:Voter_pde_Robin}
\partial_tu(t,z)=(\tau+\gamma)h^2\partial_{zz}u(t,z)+2(\gamma-\tau)h\partial_zu(t,z),
\end{equation}
with $z\in \left(-\frac{1}{2N},1+\frac{1}{2N}\right), t\in (0,T]$, and the Robin-type boundary conditions read
as
\begin{eqnarray}\label{eq:leftbc_Voter}
(\tau+\gamma)h\partial_zu\left(t,-\frac{1}{2N}\right)
+2(\gamma-\tau)u\left(t,-\frac{1}{2N}\right)=0,
\\\label{eq:rightbc_Voter}
(\tau+\gamma)h\partial_zu\left(t,1+\frac{1}{2N}\right)
+2(\gamma-\tau)u\left(t,1+\frac{1}{2N}\right)=0
\end{eqnarray}
for $t\in [0,T]$. The system   (\ref{eq:odeVoter}) was solved with
MATLAB's ode45 solver, while the partial differential equation
with MATLAB's pdepe solver. The results of the comparison are
shown in Fig.~\ref{abra_Robin} at time $t=500$ for two different
parameter choices.
\begin{figure} [h!]
\begin{center}
\mbox{\epsfig{file=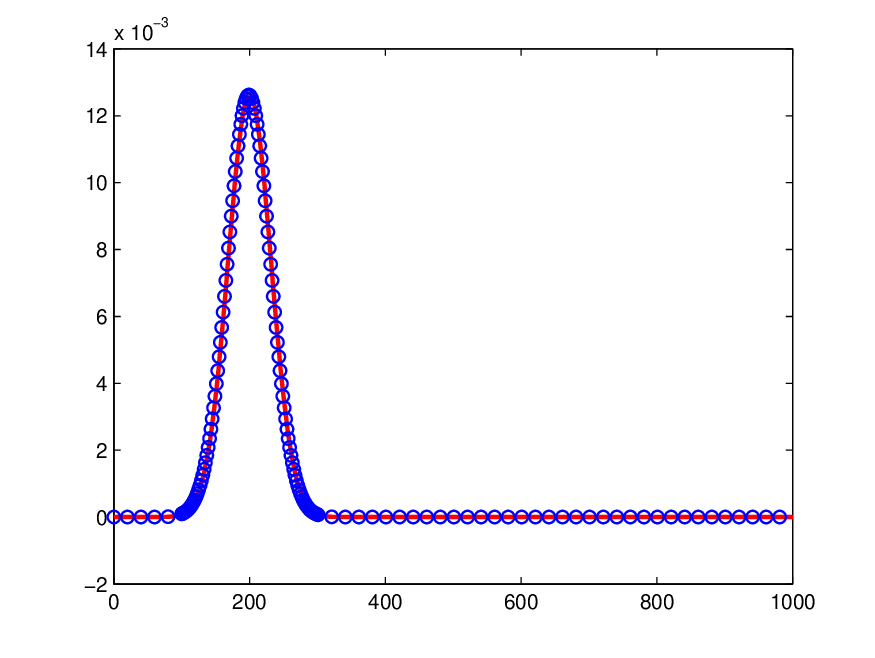,height=7cm,width=0.5\textwidth}\epsfig{file=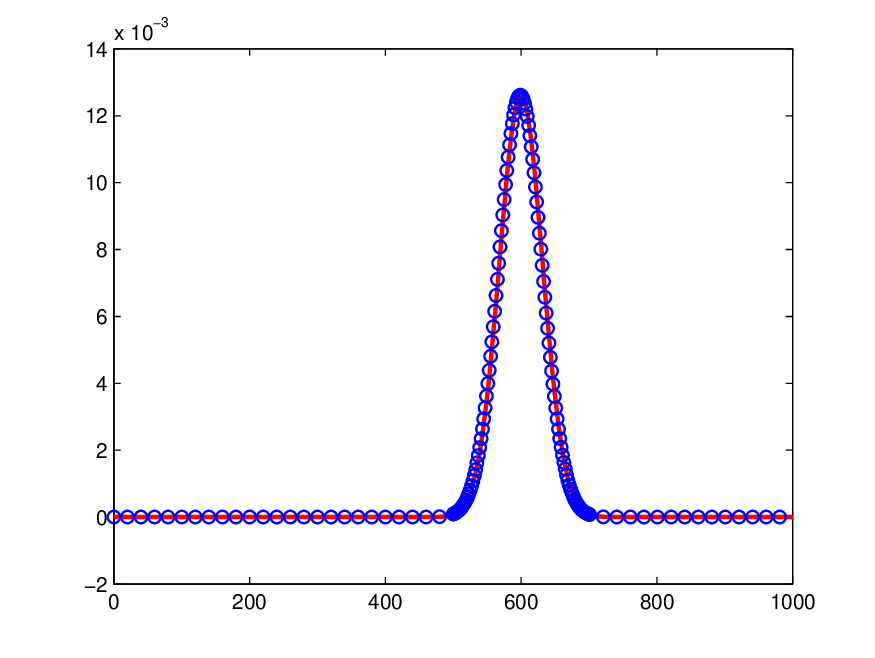,height=7cm,width=0.5\textwidth}}
\caption{The probability distribution $x_k(t)$, $k=0,1, 2, \ldots
, N$ at time $t=500$ obtained from system (\ref{eq:odeVoter})
(circles) and the solution $z\mapsto u(t,z)$ of the PDE
(\ref{eq:Voter_pde_Robin})  with boundary conditions
(\ref{eq:leftbc_Voter}) and (\ref{eq:rightbc_Voter}) at time
$t=500$ (continuous line)  with initially 200 nodes in state 1
with probability 1, and with $N=1000$,  for $\tau=\gamma=0.5$
(left panel) and for $\tau=0.7$, $\gamma=0.3$ (right panel).}
\label{abra_Robin}
\end{center}
\end{figure}

\subsection{$SIS$ disease transmission model on a complete graph}

The second motivation of our study comes from epidemiology where a paradigm disease transmission model is the
simple susceptible-infected-susceptible ($SIS$) model on a completely connected graph with $N+1$
nodes, i.e. all individuals are connected to each other. From the disease dynamic viewpoint, each
individual is either susceptible ($S$) or infected ($I$) -- a susceptible one with $k+1$ infected neighbours can be infected
at rate ($k\tau$) and the infected ones can recover at a given rate ($\gamma$) and become susceptible again. Since the graph is complete, the state space is the set $\{ 0,1, 2, \ldots , N\}$, where a number $k$ represents the state in which there are $k$ infected nodes. Starting from state $k$ the system can move either to state $k+1$ or to $k-1$, since at a given instant only one node can change its state. When the system moves from state $k$ to $k+1$ then a susceptible node becomes infected. Hence the rate of this transition is $k(N-k)\tau$, expressing that any of the $N-k$ susceptible nodes can become infected and each of them has $k$ infected neighbours (since the graph is complete). The rate of transition from state $k$ to $k-1$ is $k\gamma$, because any of the $k$ infected nodes can recover. Let us denote by $x_k(t)$ the probability that the system is in state $k$, i.e. there are $k$ infected nodes. The above transition rates lead to the differential equation
$$
\dot x(t) = (k-1)(N-k+1)\tau x_{k-1} (t) - (k(N-k)\tau+k\gamma) x(t) + (k+1)\gamma x_{k+1}(t) .
$$
(For $k=0$ and for $k=N$ the equations contain only two terms.) Thus our system of ODEs takes the form given in (\ref{eq:ode}) with $a_k=k(N-k)\tau$, $c_k=k\gamma$ and $b_k=-a_k-c_k$, that is $a(z)=N^2\tau z(1-z)$, $c(z)=N\gamma z$ and $b(z)=-a(z)-c(z)$. We note that an approximation of this system by a first order PDE was investigated in B{\'a}tkai, Kiss, Sikolya and Simon \cite{Batkai-Kiss-Sikolya-Simon}. According to (\ref{eq:pde_main}) our method yields the following second order approximation
\begin{eqnarray*}
\partial_t u(t,z)&=&\frac{\alpha (z-h)(1-z+h)+\gamma (z+h)}{2}h\partial_{zz} u(t,z)\\
&&+(\gamma (z+h)-\alpha (z-h)(1-z+h))\partial_z u(t,z)\\
&&+(\alpha (2z-1-h) +\gamma ) u(t,z)\\
\partial_t u(t,0)&=&\gamma h\partial_z u(t,0) + \gamma  u(t,0)\\
\partial_t u(t,1)&=&-\alpha (1-h)h\partial_z u(t,1) + \alpha (1-h) u(t,1).
\end{eqnarray*}
Our theorem implies that the solution of this PDE approximates the solution of the corresponding ODE (\ref{eq:ode}) in the order of $1/N^2$. We note that the usually used first order PDE approximates the ODE in the order of $1/N$. The advantage of that first order PDE is that it can be solved analytically yielding the well-known mean-field approximation for the expected number of infected nodes, see B\'atkai et al. \cite{Batkai-Kiss-Sikolya-Simon}. Our second order PDE cannot be solved analytically, hence only a numerical approximation can be obtained by using our method. It is the subject of future work to derive PDE approximations for epidemic propagation on different random graphs and compare their solutions to those of the original ODE system.

\end{document}